\newtheorem*{cor}{Corollary}
\theoremstyle{remark}
\newtheorem{remark}{Remark}[section]
\theoremstyle{plain}
\newtheorem{thm}{Theorem}[section]
\newcommand{\R}{{\mathbb R}}
\newcommand{\N}{{\mathbb N}}
\newcommand{\Z}{{\mathbb Z}}
\def\R{\mathbb{R}}
\def\Z{\mathbb{Z}}
\title{Discrete multilinear maximal functions and number theory}
\author[T.C. Anderson]{Theresa C. Anderson}
\address{Department of Mathematics, Carnegie Mellon University, Hammerschlag Dr., Pittsburgh, PA 15213}
\email{tanders2@andrew.cmu.edu}
\numberwithin{equation}{section}
\begin{document}

\maketitle

\begin{abstract}
    Many multilinear discrete operators are primed for pointwise decomposition; such decompositions give structural information but also an essentially optimal range of bounds.  We study the (continuous) slicing method of Jeong and Lee -- which when debuted instantly gave sharp multilinear operator bounds -- in the discrete setting.  Via several examples, number theoretic connections, pointed commentary, and a unified theory we hope that this useful technique will lead to further applications.  This work generalizes, and was inspired by, the author's work with Palsson on a special case. 
\end{abstract}

\section{Introduction}

Studying analytic operators from both a multilinear perspective and a discrete one has been an active area of research.  Typically these operators have non-trivial boundedness properties if the underlying surface of integration is curved.  A prototypical curved object is the sphere, and maximal spherical averaging operators arise naturally in many contexts.  From the multilinear view, optimal Lebesgue space bounds for (multilinear) spherical maximal functions had been pursued in many papers, such as \cite{Oberlin, BGHHO, Geba_et_al, HHYpreprint}, building upon work of Stein \cite{St76} and Bourgain \cite{B86}.  From a discrete perspective, Magyar-Stein-Wainger showed optimal bounds that were both different from the continuous ones and heavily employed number theoretic techniques \cite{MSW}.  Since their work many analogues and variations have been considered, such as \cite{Pierce, Hughes, Mirek_etal, MirekTrojan, Ionescu, ACHK} to name just a few.

Very recently, Jeong and Lee debuted a \emph{slicing} or \emph{slice and dice} technique that not only allowed one to get the sharp bounds for the bilinear spherical maximal function \cite{JL} (see also \cite{Dosidis}), but showcased a nice decomposition of the operator into a pointwise product of linear operators.  Building on this work, Anderson and Palsson employed this technique in the discrete setting to very quickly deduce optimal bounds there for multilinear versions \cite{AP2} (these are recalled in the next section).  This instantly improved their previous work \cite{AP}, which relied on number theoretic techniques, in all but two dimensions.  It remains an open question about the optimal bounds in those dimensions; the difficulty of getting optimal multilinear bounds in low dimensions is indeed due to the dimensional restrictions of the linear operators, and this phenomenon does not seem to appear in the continuous setting.  However, the slicing technique provides fast and powerful results in all other dimensions, along with a nice pointwise product bound.

This technique is much more general than the multilinear spherical averaging operator.  Here we initiate the study into the generality of Jeong and Lee's slicng technique in the discrete multilinear setting.  We prove several bounds for multilinear averaging operators, connect the slicing with the underlying number theory of the arithmetic surface, and formulate a general result. 

We begin this paper with a very brief description of the slicnig technique in the discrete setting, before showing how it works via the discrete multilinear Hardy-Littlewood maximal function, the discrete multilinear $k$-spherical maximal function, and the discrete multilinear maximal function over the \emph{Waring-Goldbach} surface.  The results for the Hardy-Littlewood case are basic, but display the structure of the technique.  The spherical results are found in \cite{AP2}, so our treatment is brief.  Finally, the Waring-Goldbach operator is the most intricately connected to number theory via the well-studied \emph{Waring-Goldbach Problem}, which asks for statistics of \emph{prime} lattice points on spheres \cite{Hua_book} (more details appear in the next section).  It is here that the multilinearity and the underlying distribution of the averaging set (primes) interact the most subtly.  The Waring-Goldbach results that we obtain are indicative of this and have definite connections to how one must take care in relating the Waring-Goldbach problem in $\Z^{\ell d}$ to one in $\Z^d$.  This can be viewed in some sense as a lack of scale invariance or lack of translation-dilation invariance when working over restricted sets such as the primes.  There are likely several immediate applications to bilinear ergodic theory, specifically to multilinear operators whose linear versions have restrictions.  The slicing method provides a criterion for classifying the operators where one can easily utilize the linear ergodic theorems to piggyback to multilinear results.  While we specifically outline the Waring-Goldbach case here, without delving into these applications, it is likely that this framework is useful much more generally for other restricted sets.  Finally, based on all of these examples, we describe and discuss a general framework and result in this paper's final section.  Likely many extensions are possible; this study is only the start.

\subsection{Acknowledgements}
The author is supported by NSF DMS-1954407 and DMS-2231990.  She thanks Angel Kumchev for informative discussions about the Waring-Goldbach Problem and the anonymous referee for several suggestions that greatly improved the exposition.

\section{Introduction of technique and applications}
The basic argument of Jeong and Lee involves slicng the bilinear spherical maximal function into two linear pieces: the Hardy-Littlewood maximal function and the (linear) spherical maximal function.  From there the optimal linear bounds for these operators, properly interpolated with the symmetric slicing (i.e. switching the functions that go with each piece), and trivial estimates, allows for the optimal bounds.  

This technique is extremely natural in the discrete setting, allowing for fluid, short, proofs as exhibited in several of the examples below.  There is the additional feature, mentioned in the Introduction, that since there are sometimes dimensional restrictions on the linear operators (which arise from underlying number theoretic considerations), these amplify in the multilinear setting, missing the sharp bounds in just a few low dimensions.  However, the ease and generality of this technique, along with the wide range of sharp bounds, is appealing, and reduces difficult problems to a few exceptional cases.  Hence, when we discuss sharpness, we mean sharpness in Lebesgue space exponents; noting that the dimensional restrictions oftentimes miss the full conjectured sharp bounds.  

The three specific classes of operators that we study all display different subtleties of this method, particularly in the Waring-Goldbach case.  Bilinear cases of the first two types of operators studied are easy to depict pictorally in terms of the exponents mapped from.  For all of these cases, we get the full \emph{H\"older range}; a figure of the bilinear bounds for the operators defined in Sections 2.1 and 2.2 appear in Figure 1 at the end of Section 2.3.  A key remark is that the \emph{nesting property} (described below) of discrete spaces allows us to extend these bounds from the H\"older range to the full range of bounds.

We use the notation $|\bm{u}|^2 = |u_1|^2 + \dots |u_d|^2$ for vectors in $\Z^d$.  We will frequently and implicitly use the \emph{nesting} feature of the discrete $l^p$ norms, that is that $\|f\|_q \leq \|f\|_p$ for all $1 \leq p \leq q \leq \infty$.  we also use symmetry in our arguments to claim the overall bounds: that is, showing an $l^{p_1}(\Z^d) \times \cdots \times l^{p_\ell}(\Z^d)$ bound also shows a similar bound with any permutation of the $p_i$ indices, unless otherwise indicated.

\subsection{Discrete multilinear Hardy-Littlewood maximal function}
The $\ell$-linear discrete Hardy-Littlewood averages are defined by
\[
T_\lambda(f_1,\ldots,f_{\ell})(\bm{x}) = \left| \frac{1}{B(\lambda)}\sum_{\bm{u_1}^2+\cdots + \bm{u_\ell}^2\leq\lambda}f_1(\bm{x}-\bm{u_1})\cdots f_\ell (\bm{x}-\bm{u_\ell}) \right|
\]
where $\bm{u_i} \in \Z^d$ and 
\[
B(\lambda) = \# \{ (\bm{u_1},\dots ,\bm{u_\ell} \in \Z^{\ell d}: \sum_i|\bm{u_i}|^2\leq \lambda\}
\]
is the number of lattice points on the ball of radius $\lambda^{1/2}$ in $\R^{\ell d}$, which is asymptotic to $c_d\lambda^{\ell d/2}$ for all $d \geq 1$ (we will frequently drop dimensional constants here).
We then define the corresponding maximal operator as
\[
T^*(f_1,\ldots,f_{\ell})(\bm{x}) = \sup_{\lambda \in \N} | T_\lambda(f_1,\ldots,f_{\ell})(\bm{x}) |. 
\]
We now illustrate the \emph{slice and dice} technique to show the following (likely) well-known result.
\begin{thm}
 $T^*(f_1, \dots , f_{\ell})$ is bounded on $l^{p_1}(\Z^d)\times\ldots\times l^{p_\ell}(\Z^d) \to l^{r}(\Z^d)$, $d \geq 1$, $\sum_i\frac{1}{p_i} \geq \frac{1}{r}$, $p_i> 1$ and $r>1/\ell$.  This range is sharp.
\end{thm}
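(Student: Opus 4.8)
The plan is to run the slice-and-dice argument in its simplest form, dominating $T^*$ pointwise by a product of linear discrete Hardy--Littlewood maximal functions, and then to close the range by nesting and check sharpness on three explicit inputs. Write $b(\lambda)=\#\{\bm u\in\Z^d:|\bm u|^2\le\lambda\}\asymp\lambda^{d/2}$ and let $M^*$ denote the (linear) discrete Hardy--Littlewood maximal operator on $\Z^d$. The key observation is that on the joint constraint set $\sum_i|\bm u_i|^2\le\lambda$ each term individually satisfies $|\bm u_i|^2\le\lambda$. Bounding each factor by its absolute value and enlarging the domain of summation to the product of balls $\{|\bm u_i|^2\le\lambda\}$ (all summands being nonnegative) factorizes the sum:
\[
T_\lambda(f_1,\ldots,f_\ell)(\bm x)\;\le\;\frac{b(\lambda)^\ell}{B(\lambda)}\prod_{i=1}^{\ell}\Big(\frac{1}{b(\lambda)}\sum_{|\bm u_i|^2\le\lambda}|f_i(\bm x-\bm u_i)|\Big).
\]
Since $b(\lambda)^\ell\asymp\lambda^{\ell d/2}\asymp B(\lambda)$, the prefactor is $O(1)$ uniformly in $\lambda$, and each bracketed term is a normalized linear ball average dominated by $M^*|f_i|(\bm x)$. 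Taking the supremum over $\lambda\in\N$ yields the pointwise product bound $T^*(f_1,\ldots,f_\ell)(\bm x)\lesssim\prod_{i=1}^{\ell}M^*|f_i|(\bm x)$.

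With this in hand the positive result is immediate. On the H\"older line $\sum_i\frac{1}{p_i}=\frac1r$, H\"older's inequality together with the classical bound $M^*:l^{p_i}(\Z^d)\to l^{p_i}(\Z^d)$ for $p_i>1$ gives $\|T^*(f_1,\ldots,f_\ell)\|_{r}\lesssim\prod_i\|M^*|f_i|\|_{p_i}\lesssim\prod_i\|f_i\|_{p_i}$; here $p_i>1$ forces $\frac1r=\sum_i\frac1{p_i}<\ell$, so $r>1/\ell$ is automatic. To reach the full range $\sum_i\frac1{p_i}\ge\frac1r$, set $\frac1{r'}=\sum_i\frac1{p_i}\ge\frac1r$, so $r'\le r$; the H\"older-line case bounds $T^*$ into $l^{r'}$, and the nesting inequality $\|\cdot\|_{r}\le\|\cdot\|_{r'}$ upgrades this to a bound into $l^r$. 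This is exactly the mechanism flagged before the statement by which H\"older-range bounds extend to the full range.

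Finally I would verify sharpness through three inputs, each isolating one constraint. Taking $f_1=\cdots=f_\ell=\delta_0$ one computes $T^*(\delta_0,\ldots,\delta_0)(\bm x)\asymp|\bm x|^{-\ell d}$, whose $l^r$ norm is finite precisely when $\ell d r>d$, i.e.\ $r>1/\ell$; since $\|\delta_0\|_{p_i}=1$ independently of $p_i$, this shows $r>1/\ell$ cannot be relaxed. Taking $f_1=\cdots=f_\ell=\mathbf 1_{B_N}$, the indicator of the ball of radius $N$ about the origin, a short count shows $T^*\gtrsim1$ on $B_{N/2}$, so $\|T^*\|_r\gtrsim N^{d/r}$ while $\prod_i\|f_i\|_{p_i}\asymp N^{d\sum_i 1/p_i}$; letting $N\to\infty$ forces $\sum_i\frac1{p_i}\ge\frac1r$. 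The necessity of $p_i>1$ is the most delicate point, and its honest failure sits on the H\"older line: choosing the remaining functions flat, say $f_2=\cdots=f_\ell\equiv1\in l^\infty$ and $f_1=\delta_0$, lets the other $\ell-1$ averages be $\asymp1$ and yields the lower bound $T^*(f_1,1,\ldots,1)\gtrsim M^*f_1$, so the operator inherits the endpoint failure of $M^*$ from $l^1$ (here $M^*\delta_0\asymp|\bm x|^{-d}\notin l^1$, against the H\"older-line value $r=1$). The main---and essentially only---genuine subtlety in the argument is confirming that the volume ratio $b(\lambda)^\ell/B(\lambda)$ stays bounded; everything else is the slicing identity, H\"older's inequality, nesting, or a direct computation on test functions.
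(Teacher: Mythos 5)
Your proof is correct, and its core is the same slicing strategy the paper uses: dominate $T^*$ pointwise by $\prod_i M_{HL}f_i$, conclude on the H\"older line via H\"older's inequality and the $l^{p_i}$-boundedness of $M_{HL}$ for $p_i>1$, extend to $\sum_i 1/p_i\ge 1/r$ by nesting, and test Dirac deltas for necessity. Two implementation differences are worth recording. First, where the paper slices off one variable at a time --- bounding the inner sum over $|\bm{u_2}|^2+\cdots+|\bm{u_\ell}|^2\le\lambda-|\bm{u_1}|^2$ by a maximal function in the shifted radius $\eta=\lambda-|\bm{u_1}|^2$ and then running an induction on $\ell$ --- you enlarge the constraint set $\{\sum_i|\bm{u_i}|^2\le\lambda\}$ to the product of balls $\prod_i\{|\bm{u_i}|^2\le\lambda\}$ in one shot, factorizing the sum symmetrically with the uniformly bounded prefactor $b(\lambda)^\ell/B(\lambda)$; this eliminates the induction and is, if anything, cleaner. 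Second, your sharpness analysis is more complete than the paper's: the paper tests only $f_i=\delta_0$, which yields necessity of $r>1/\ell$ (its phrase ``converges if and only if $r\ge 1/\ell$'' is a slip --- divergence at $r=1/\ell$ is what one actually uses, as you do) but says nothing about the remaining constraints, whereas you additionally test $f_i=\mathbf{1}_{B_N}$ to force $\sum_i 1/p_i\ge 1/r$ and the configuration $(\delta_0,1,\ldots,1)$ to exhibit failure at the endpoint $p_1=1$, $r=1$. Your caveat that the failure of $p_i=1$ is only visible at the H\"older endpoint is also accurate --- weak type $(1,1)$ for $M_{HL}$ together with discreteness actually gives $l^1\times l^\infty\times\cdots\times l^\infty\to l^r$ bounds for $r>1$ --- a subtlety that the paper's blanket claim ``this range is sharp'' glosses over.
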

\begin{proof}
We prove this by induction.  We start with the base case $\ell = 2$.  Here we slightly rename our variables and work with
\[
T_\lambda(f,g)(\bm{x}) = \left| \frac{1}{B(\lambda)}\sum_{|\bm{u}|^2+|\bm{v}|^2\leq \lambda}f(\bm{x}-\bm{u})g(\bm{x}-\bm{v}) \right|.
\]
Thus
\[
|T_\lambda (f,g)(\bm{x})| \lesssim \frac{1}{\lambda^{\frac{d}{2}}}\sum_{|\bm{u}|^2 \leq \lambda}|f(\bm{x}-\bm{u})|\cdot \frac{1}{\lambda^{\frac{d}{2}}} \bigg|\sum_{|\bm{v}|^2 \leq \lambda - |\bm{u}|^2} g(\bm{x}-\bm{v})\bigg|.
\]
Now let $\eta = \lambda - |\bm{u}|^2$, so $\eta \leq \lambda$ and
\[
\frac{1}{\lambda^{\frac{d}{2}}} \bigg|\sum_{|\bm{v}|^2 \leq \lambda - |\bm{u}|^2} g(\bm{x}-\bm{v})\bigg| \leq \sup_{\eta \in\N}\frac{1}{\eta^{\frac{d}{2}}} \bigg|\sum_{|\bm{v}|^2 \leq \eta} g(\bm{x}-\bm{v}) \bigg| = M_{HL}(g)(\bm{x})
\]
where $M_{HL}$ is the (discrete) linear Hardy-Littlewood maximal function.   Thus we can slice our original operator and insert the above to get the pointwise product
\[
|\sup_{\lambda\in\N}|T_\lambda (f,g)(\bm{x})| \lesssim \sup_{\lambda\in\N} \frac{1}{\lambda^{\frac{d}{2}}}\sum_{|\bm{u}|^2 \leq \lambda} |f(\bm{x}-\bm{u})| \cdot M_{HL}(g)(\bm{x}) = M_{HL}(f)(\bm{x}) \cdot M_{HL}(g)(\bm{x})
\]
and since each operator in this pointwise product is bounded on $l^p(\Z^d)$ for all $p >1$, the result follows with $r > 1/2$.

Now we assume boundedness of the $(\ell-1)$-linear operator and prove bounds for the $\ell$-linear one.  We first slice
\[
T_\lambda(f_1,\ldots,f_{\ell})(\bm{x}) \lesssim \frac{1}{\lambda^{d/2}}\sum_{\bm{u_1}^2\leq\lambda}|f_1(\bm{x}-\bm{u_1})|\cdot \frac{1}{\lambda^{(\ell-1)d/2}} \left|\sum_{|\bm{u_2}|^2+ \dots |\bm{u_\ell}|^2\leq\lambda} f_2 (\bm{x}-\bm{u_2})\cdots f_\ell (\bm{x}-\bm{u_\ell}) \right|
\]
and dice
\[
\frac{1}{\lambda^{(\ell-1)d/2}}\sum_{|\bm{u_2}|^2+ \dots |\bm{u_\ell}|^2\leq\lambda} \prod_{i=2}^\ell\left|f_i (\bm{x}-\bm{u_i}) \right|
\leq M_{HL}^{\ell-1}(f_2, \dots ,f_\ell),
\]
so overall
\[
T^*(f_1, \dots , f_{\ell}) \lesssim M_{HL}(f_1)\cdot M_{HL}^{\ell-1}(f_2, \dots ,f_\ell).
\]
By the induction hypothesis, we know the $l^{p_2}(\Z^d)\times\ldots\times l^{p_\ell}(\Z^d) \to l^{r}(\Z^d)$ bounds for $M_{HL}^{\ell-1}$, therefore overall, we can conclude the claimed bounds.

We now show sharpness.  Our pointwise products obtained above suggest that we can use pointwise products of the sharpness examples for the linear version of our operator, and indeed this is the case.  We briefly outline the argument.  Let $f_i = \delta_0$ (the Dirac delta function centered at the origin).
\[
\|\sup_{\lambda\in\N}|T_\lambda (f_1, \dots , f_\ell))\|^r_{l^r(\Z^d)} =
\sum_{\bm{x} \in \Z^d}\sup_{\lambda\in\N}\bigg|\frac{1}{\lambda^{\frac{\ell d}{2}}}\sum_{|\bm{u_1}|^2+\cdots +|\bm{u_\ell}|^2 \leq  \lambda}\prod_{i=1}^\ell \delta_0(\bm{x}-\bm{u_i})\bigg|^r
\]
\[
\geq \sum_{\bm{x}\in \Z^d}\sup_{\lambda\in\N}\frac{1}{\lambda^{\ell dr/2}}\cdot \#\{\bm{u_i} : \bm{u_i} = \bm{x}, \ell|\bm{x}|^2 \leq \lambda\}.
\]
Choose $\lambda = \ell|\bm{x}|^2$ and bound from below by
\[
 \geq \sum_{\bm{x}\in\Z^d} \frac{1}{(\ell|\bm{x}|^2)^{\ell dr/2}} \geq C_{\ell, d, r}\sum_{\bm{x} \in \Z^d}\frac{1}{|\bm{x}|^{\ell dr}} 
\]
which converges if and only if $r > 1/\ell$ for all $d \geq 1$.

\end{proof}

\begin{remark}
This result can easily be upgraded to weak type restricted estimates at endpoints using known endpoint estimates for the Hardy-Littlewood maximal function.  Similar comments also apply to all results stated below, using the known endpoint estimates for the linear operators.
\end{remark}

\begin{remark}
This result also holds in the case of ``k-balls", that is, averages over surfaces $B_k := \{\sum_i |\bm{u_i}|^k \leq \lambda\}$.  The above proofs all follow through with obvious modifications, leading to the same results.
\end{remark}

\subsection{Discrete multilinear (k)-spherical maximal function}
These results are proved in \cite{AP2}.  We recall the main results, make a few comments, and refer the reader to \cite{AP2} for more details.

We begin by defining averages over ``degree $k$" spheres, $k \geq 3$, $k \in \Z$ (if $k$ is odd, we assume that $y^k = |y|^k$).
Recall that the $\ell$-linear degree $k$ discrete spherical averages are defined as:
\[
T_\lambda(f_1,\ldots,f_{\ell})(\bm{x}) = \left| \frac{1}{N(\lambda)}\sum_{\bm{u_1}^k+\cdots + \bm{u_\ell}^k=\lambda}f_1(\bm{x}-\bm{u_1})\cdots f_\ell (\bm{x}-\bm{u_\ell}) \right|
\]
where $N(\lambda) = \#\{ (\bm{u_1},\dots ,\bm{u_\ell}) \in \Z^{\ell d}: \sum_i|\bm{u_i}|^k= \lambda\}$ is asymptotic to $c_{d,k}\lambda^{\frac{\ell d}{k}-1}$ for all $d > d_0(k)/\ell$ (see \cite{ACHK2} for precise values). Define the maximal function $T^*$ as in the previous subsection.  

Also define \[r_0(d,k) = \frac{2+2\delta_0(d,k)}{(\ell - 1)(2+2\delta_0(d,k)) + (1+2\delta_0(d,k))},\] where $\delta_0(d,k)$, defined on page 2 of \cite{ACHK2}, relates to the best known bounds for the discrete linear degree $k$ operator.  Finally define \[p_0(d,k) = max \{ 1+\frac{1}{1+2\delta_0(d,k)}, \frac{d}{d-k}\}.\]  This $p_0(d,k)$ provides the best known $l^p$ bounds for the discrete linear operator; it is conjectured to be equal to $\frac{d}{d-k}$ for all $k \geq 3$.  We can now state (from \cite{AP2})
\begin{thm}
 $T^*(f_1, \dots , f_{\ell})$ is bounded on $l^{p_1}(\Z^d)\times\ldots\times l^{p_l}(\Z^d) \to l^{r}(\Z^d)$, $\frac{1}{p_1} + \ldots + \frac{1}{p_l} \geq \frac{1}{r}$, $r>\max\{ r_0(d,k), \frac{d}{\ell d-k}\}$, $p_1,\ldots,p_{\ell}> 1$ and $d > d_{0}(k)$.  Moreover, the bound $r >  \frac{d}{\ell d-k}$ is a necessary condition.
\end{thm}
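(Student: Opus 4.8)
The plan is to reproduce the slicing template of Theorem 1, but with one ball-average factor upgraded to the linear degree-$k$ spherical maximal function. Write $M_S^k$ for the discrete linear degree-$k$ spherical maximal function and $r_k(\eta) = \#\{\bm{u}\in\Z^d : \bm{u}^k = \eta\}$. First I would slice off the $\ell$-th sphere, writing
\[
T_\lambda(f_1,\ldots,f_\ell)(\bm{x}) = \frac{1}{N(\lambda)}\sum_{\bm{u_1}^k+\cdots+\bm{u_{\ell-1}}^k\leq\lambda}\Bigg(\prod_{i=1}^{\ell-1} f_i(\bm{x}-\bm{u_i})\Bigg)\sum_{\bm{u_\ell}^k=\lambda-\sum_{i<\ell}\bm{u_i}^k} f_\ell(\bm{x}-\bm{u_\ell}).
\]
Setting $\eta = \lambda - \sum_{i<\ell}\bm{u_i}^k$, the inner sum equals $r_k(\eta)$ times a normalized degree-$k$ spherical average at radius parameter $\eta$, hence is dominated pointwise by $r_k(\eta)\,M_S^k f_\ell(\bm{x})$. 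Pulling this factor out leaves a weighted $(\ell-1)$-fold $k$-ball average of $|f_1|,\ldots,|f_{\ell-1}|$ carrying weights $r_k(\eta)/N(\lambda)$.

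The decisive point is that these weights are uniformly comparable to the reciprocal volume of the $(\ell-1)d$-dimensional $k$-ball. Using the asymptotics $N(\lambda)\sim\lambda^{\ell d/k-1}$ and $r_k(\eta)\sim\eta^{d/k-1}$ -- valid precisely when $d>d_0(k)$ -- and noting that $\eta\leq\lambda$ over the summation region, one obtains $r_k(\eta)/N(\lambda)\lesssim \lambda^{-(\ell-1)d/k}$ uniformly. Thus the residual weighted average is dominated by the multilinear Hardy--Littlewood maximal function over $k$-balls, which by the Remark following Theorem 1 is bounded by $\prod_{i<\ell}M_{HL}f_i(\bm{x})$. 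This yields the pointwise product bound
\[
T^*(f_1,\ldots,f_\ell)(\bm{x}) \lesssim M_S^k f_\ell(\bm{x})\cdot\prod_{i=1}^{\ell-1}M_{HL}f_i(\bm{x}),
\]
and, by symmetry, the analogous estimate with $M_S^k$ applied to any single $f_j$. I expect this uniform weight bound to be the main obstacle: it is exactly the step forcing the representation asymptotics for both the single sphere and the full $\ell$-fold surface, and hence the dimensional restriction $d>d_0(k)$.

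With the pointwise product in hand, boundedness follows from H\"older's inequality together with the known linear bounds: $M_{HL}$ on $l^p(\Z^d)$ for every $p>1$, and $M_S^k$ on $l^p(\Z^d)$ for $p>p_0(d,k)$. Placing the distinguished spherical factor in $l^{p_j}$ with $p_j>p_0(d,k)$ and each remaining factor in $l^{p_i}$ with $p_i>1$ gives an $l^r$ bound along the H\"older line $\frac{1}{r}=\sum_i\frac{1}{p_i}$. Letting the exponents approach their thresholds, the two branches of the maximum defining $p_0(d,k)$ produce exactly the two endpoints $r>r_0(d,k)$ and $r>\frac{d}{\ell d-k}$; interpolating between the symmetric slicings fills out the H\"older line with $r>\max\{r_0(d,k),\frac{d}{\ell d-k}\}$ and $p_i>1$. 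The nesting property $\|\cdot\|_q\leq\|\cdot\|_p$ for $q\geq p$ then upgrades these to the full range $\sum_i\frac{1}{p_i}\geq\frac{1}{r}$. Tracking which branch of $p_0(d,k)$ is active, and therefore which of the two $r$-thresholds governs, is where the remaining bookkeeping is most delicate.

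For necessity, I would test on the Dirac deltas $f_i=\delta_0$. The product $\prod_i\delta_0(\bm{x}-\bm{u_i})$ is supported on $\bm{u_i}=\bm{x}$, forcing $\lambda=\ell|\bm{x}|^k$, so choosing this value of $\lambda$ gives $T^*(\delta_0,\ldots,\delta_0)(\bm{x})\gtrsim N(\ell|\bm{x}|^k)^{-1}\sim|\bm{x}|^{-(\ell d-k)}$. Consequently $\|T^*(\delta_0,\ldots,\delta_0)\|_{l^r}^r\gtrsim\sum_{\bm{x}}|\bm{x}|^{-(\ell d-k)r}$, which converges if and only if $r>\frac{d}{\ell d-k}$, establishing the stated necessary condition.
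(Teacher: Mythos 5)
Your proposal is correct and takes essentially the same approach as the paper: the paper defers the positive bounds to the slicing argument of \cite{AP2}, which is exactly the pointwise domination $T^*(f_1,\dots,f_\ell)\lesssim M_S^k f_j\cdot\prod_{i\neq j}M_{HL}f_i$ (using $r_k(\eta)\lesssim\eta^{d/k-1}\leq\lambda^{d/k-1}$ for $d>d_0(k)$) followed by H\"older, interpolation between the symmetric slicings, and nesting that you reconstruct. The only part the paper proves directly is the necessary condition, and your Dirac-delta test with $\lambda=\ell|\bm{x}|^k$, giving $\sum_{\bm{x}}|\bm{x}|^{-(\ell d-k)r}$, reproduces that argument exactly.
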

The case $k=2$ is much easier to state
\begin{cor}
 $T^*(f_1, \dots , f_{\ell})$ is bounded on $l^{p_1}(\Z^d)\times\ldots\times l^{p_\ell}(\Z^d) \to l^{r}(\Z^d)$, $\frac{1}{p_1} + \ldots + \frac{1}{p_l} \geq \frac{1}{r}$, $r>\frac{d}{\ell d-2}$, $p_1,\ldots,p_{\ell}> 1$ and $d \geq 5$.
\end{cor}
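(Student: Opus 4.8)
The plan is to specialize the slicing argument behind Theorem 2 (carried out in \cite{AP2}) to the case $k=2$, where the sharp linear bounds of Magyar--Stein--Wainger \cite{MSW} are available and clean. The base case $\ell=1$ is precisely the MSW theorem: the linear discrete spherical maximal function is bounded on $l^p(\Z^d)$ for $p>d/(d-2)$ and $d\ge 5$, which matches the asserted range $r>d/(\ell d-2)$ when $\ell=1$. For the inductive step I would fix one factor and slice off its variable: writing the sphere $|\bm{u_1}|^2+\cdots+|\bm{u_\ell}|^2=\lambda$ as a fibration over $m=|\bm{u_1}|^2$, the point $\bm{u_1}$ ranges over a $d$-dimensional sphere of radius $\sqrt m$ while $(\bm{u_2},\dots,\bm{u_\ell})$ ranges over a $(\ell-1)d$-dimensional sphere of radius $\sqrt{\lambda-m}$. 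Denoting by $N_{\ell-1}(\mu)\sim\mu^{(\ell-1)d/2-1}$ the count on the latter, this exhibits $T_\lambda$ as a weighted average, with weights $N_{\ell-1}(\lambda-m)/N(\lambda)$, of products of a single-sphere average of $f_1$ with the $(\ell-1)$-linear spherical average of $(f_2,\dots,f_\ell)$.

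The decisive point is that the slicing is \emph{asymmetric}: rather than bounding both factors by spherical maximal functions, I would estimate only the $(\ell-1)$-linear factor by the $(\ell-1)$-linear spherical maximal operator $T^*_{\ell-1}$ (controlled by the induction hypothesis), and then \emph{sum} the remaining single-sphere averages of $f_1$ over the radius $m$. Since summing spherical sums over all radii up to $\lambda$ reconstitutes a ball sum, the surviving factor collapses to a Hardy--Littlewood average. Concretely, using $N_{\ell-1}(\lambda-|\bm{u_1}|^2)\lesssim\lambda^{(\ell-1)d/2-1}$ together with $N(\lambda)\sim\lambda^{\ell d/2-1}$ gives the uniform weight bound $N_{\ell-1}(\lambda-|\bm{u_1}|^2)/N(\lambda)\lesssim\lambda^{-d/2}$, so that
\[
T^*(f_1,\dots,f_\ell)(\bm{x})\lesssim M_{HL}(f_1)(\bm{x})\cdot T^*_{\ell-1}(f_2,\dots,f_\ell)(\bm{x}).
\]
Feeding in $M_{HL}\colon l^{p_1}\to l^{p_1}$ for every $p_1>1$ and the inductive bound $T^*_{\ell-1}\colon l^{p_2}\times\cdots\times l^{p_\ell}\to l^s$ for $s>d/((\ell-1)d-2)$, H\"older gives $1/r=1/p_1+1/s$; letting $p_1\to1^+$ and $s\to d/((\ell-1)d-2)^+$ pushes $1/r$ up to $1+((\ell-1)d-2)/d=(\ell d-2)/d$, which is exactly the claimed threshold $r>d/(\ell d-2)$. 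Slicing off each of the $\ell$ variables in turn and interpolating the resulting symmetric estimates yields the full H\"older range $\sum_i 1/p_i\ge 1/r$, and the nesting property of $l^p(\Z^d)$ then extends this to the stated range with all $p_i>1$.

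For the necessary condition I would test against $f_i=\delta_0$ for all $i$, exactly as in the Hardy--Littlewood sharpness computation above. Here $T_\lambda(\delta_0,\dots,\delta_0)(\bm{x})$ is nonzero only when every $\bm{u_i}=\bm{x}$ and $\lambda=\ell|\bm{x}|^2$, contributing $1/N(\ell|\bm{x}|^2)\sim|\bm{x}|^{-(\ell d-2)}$, so that
\[
\|T^*(\delta_0,\dots,\delta_0)\|_{l^r(\Z^d)}^r\gtrsim\sum_{\bm{x}\in\Z^d}|\bm{x}|^{-r(\ell d-2)},
\]
which is finite if and only if $r>d/(\ell d-2)$. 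I expect the main obstacle to be the uniform weight estimate and, behind it, the lattice-point asymptotics: the bound $N_{\ell-1}(\mu)\lesssim\mu^{(\ell-1)d/2-1}$ must be controlled all the way to the boundary $m\to\lambda$ where $\lambda-m$ is small, and the single-sphere-to-ball collapse requires the MSW counting asymptotics. Both are precisely what forces $d\ge 5$ (so that $(\ell-1)d\ge 5$ and all relevant spheres are nondegenerate with valid circle-method counts), and this dimensional restriction is inherited verbatim from the linear operator in the base case.
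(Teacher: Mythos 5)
Your proposal is correct and takes essentially the same route as the paper (which defers the full details to \cite{AP2} but sketches exactly this argument): slice the $\ell d$-dimensional sphere along one variable, use the uniform lattice-point bound $N_{\ell-1}(\mu)\lesssim\mu^{(\ell-1)d/2-1}$ (valid since $(\ell-1)d\geq 5$) to collapse the sliced-off factor into a Hardy--Littlewood average and obtain the pointwise bound $T^*\lesssim M_{HL}(f_1)\cdot T^*_{\ell-1}(f_2,\ldots,f_\ell)$, then combine the $\ell$ symmetric slicings with H\"older, interpolation, and the nesting property, and verify necessity by testing Dirac deltas. The only cosmetic difference is that you run the induction from the Magyar--Stein--Wainger linear case $\ell=1$ rather than treating the bilinear case as the base; the substance is identical.
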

Proofs of these are found in \cite{AP2}; however, we want to mention that there is an even easier argument to show the sharpness of these bounds (or necessary conditions).  We briefly sketch this argument below in the most general case
\begin{proof}
We show the necessary condition via the simpler example $f_i = \delta_0$ for all $i$.  Then 
\[
\|\sup_{\lambda\in\N}|T_\lambda (f_1, \dots , f_\ell))\|^r_{l^r(\Z^d)} = 
\sum_{\bm{x} \in \Z^d}\sup_{\lambda\in\N}\bigg|\frac{1}{\lambda^{\frac{\ell d}{k}-1}}\sum_{|\bm{u_1}|^k+\cdots +|\bm{u_\ell}|^k =  \lambda}\prod_{i=1}^\ell\delta_0(\bm{x}-\bm{u_i})\bigg|^r.
\]
As earlier choose $\lambda = \ell|\bm{x}|^k$, so that there is only one nonzero term in the inner sum for each $\bm{x}$, and we can bound
\[
 \geq \sum_{\bm{x}\in\Z^d} \frac{1}{\ell |\bm{x}|^{(\ell d -k))r}}
\]

which converges if and only if $r \geq \frac{d}{\ell d-k}$, which matches the sharp range for $k=2$.  Moreover, it works for any dimension where there is an infinite sequence of spheres such that the Hardy-Littlewood asymptotic holds (after redefining the operator to only average over the restricted range).  For example, if $k=2$, this argument works for $d=4$ whenever $\lambda$ is not divisible by 4.
\end{proof}

\begin{figure}[h!]\label{range}
\begin{tikzpicture}
\draw (0,0) rectangle (5,5);
\path[fill=blue!35] (0,0)--(0,5)--(3,5) --(5,3)--(5,0);
%\path[fill=green!35] (3,5)--(4,5)--(5,4)--(5,3)--(3,5);
%\draw[dash pattern= { on 2pt off 1pt}]  (0,5)--(4,5)--(5,4)--(5,0);
\draw (0,5)--(0,0)--(5,0);
\node [below left] at (0,0) {$O$};\node [above] at (0,5) {$A$};
\node [above] at (2.5,5) {$B=(\frac{d-2}{d},1)$};
%\draw (4,5) circle [radius=0.04];
%\node [above] at (4.2,5) {$C$};
\draw (3,5) circle [radius=0.04];
\node [right] at (5,3) {$B'$};
\draw (5,3) circle [radius=0.04];
%\node [right] at (5,4) {$C'=(1,\frac{d-2+2\theta}{d})$};
%\draw (5,4) circle [radius=0.04];
\draw (5,5) circle [radius=0.04];
%\node [right] at (5,5) {$B$};
\node [below right] at (5,0) {$A'$};
\draw [<->] (5.8,0.7)--(5.8,0)--(6.5,0);
\node at (6.7,0) {$\frac{1}{p}$};
\node at (5.8,0.7) [right]{$\frac{1}{q}$};
\end{tikzpicture}
\caption{The range of $l^p(\Z^d)\times l^q(\Z^d)$ bounds (in terms of $\frac{1}{p}$ and $\frac{1}{q}$) for the operators in 2.1 and 2.2: the full square represents the bounds for the bilinear operator in 2.1 and the blue region for the operator in 2.2.  The notation $X'$ represents the symmetric point to $X$ about the diagonal.  These bounds are all sharp.}
\end{figure}
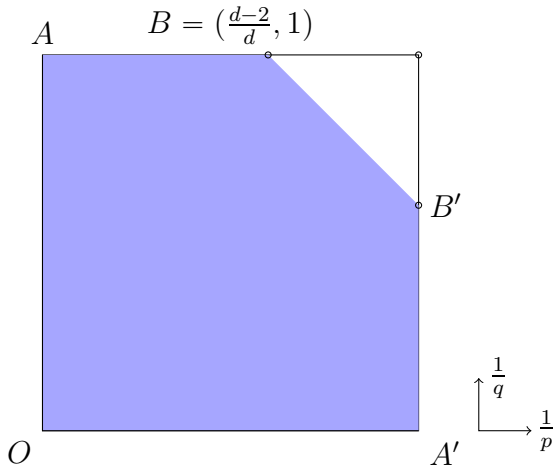

\subsection{Discrete multilinear Waring-Goldbach maximal function}
We now show an example of how this technique works over multilinear averages on ``sparser" sets via considering multilinear averages over prime points on spheres.  The delicate interplay between multilinearity, curvature, and primes demonstrates that multilinear surfaces defined over restricted subsets of the integers often require more intricate analysis.  This example also indicates the difficulty of directly relating ``restricted" solutions of Diophantine surfaces between different dimensions.

The discrete averaging operators we consider are averages over the prime vectors on the algebraic surface in $\Z^{\ell d}$
\begin{equation}
\label{sphere}
|\bm{p_1}|^k+ \dots + |\bm{p_\ell}|^k = \lambda, 
\end{equation} 
Let $P(\lambda)$ denote the number of prime solutions counted with logarithmic weights
\[ P(\lambda) = \sum_{|\bm{p_1}|^k + \dots |\bm{p_\ell}|^k = \lambda} \prod_{i=1}^\ell \log(\bm{p_i}), \] where $\log\mathbf x_i = (\log x_1) \cdots (\log x_d)$ and ${\bf p}$ is a vector with all $d$ coordinates prime.

The Waring--Goldbach problem in analytic number theory involves the study of these solutions (typically stated for $\ell = 1$).  Classic work of Hua \cite{Hua_book} gives an asymptotic (for $d$ large enough with respect to $k$ -- see \cite{Kumchev_Wooley1,Kumchev_Wooley2} for best recent results) as long as we restrict to a certain arithmetic progression $\Gamma_{d,k}$: 
\begin{equation}
\label{Hua}
P(\lambda) \sim \lambda^{{\ell d/k - 1}},
\end{equation} 
where we have ignored the constants, singular series and Gamma function factors that for our purposes can be regarded as constants.  Some examples of progressions $\Gamma_{d,k}$ are (see Chapter VIII in Hua~\cite{Hua_book} for more details): 
\begin{itemize}
\item $\Gamma_{d,k}$ is the residue class $\lambda \equiv d \pmod 2$ when $k$ is odd, $d>3k$;
\item $\Gamma_{5,2}$ is the residue class $\lambda \equiv 5 \pmod {24}$;
\item $\Gamma_{17,4}$ is the residue class $\lambda \equiv 17 \pmod {240}$.
\end{itemize} 
We'll call such progressions \emph{admissible} (a precise definition will follow below).

The authors in \cite{ACHK} prove $l^p(\Z^n)$ bounds for the linear discrete spherical maximal function along the primes.  They consider the discrete spherical averages
\begin{equation}\label{primeavg} 
A_\lambda (f)({\bf x}) := \frac{1}{P(\lambda)} \sum_{|{\bf p}|^k=\lambda} (\log{\bf p}) f({\bf x-p}). 
\end{equation}
and corresponding maximal function
\[
A^*(f)(\bm{x}) = \sup_{\lambda \in \Gamma_{d,k}}|A_\lambda (f)(\bm{x})|
\]
and show the following bounds (see \cite{ACHK} for definitions and notation -- the authors use $n$ there instead of $d$ to indicate dimension).
\begin{thm}\label{mainmaxfunction}
Let $k \geq 2$ and $d \geq \max\{ d_1(k), d_2(k) \}$. Let $\Gamma_{d,k}$ be an admissible progression.  The maximal function given by $A^*(f)$ is bounded on \(\ell^p(\Z^d)\) for all $p > p_{k,d}$. 
\end{thm}
In particular, this operator is bounded for all $p> d/(d-2)$ when $k=2$ and $d \geq 7$ (and this bound is sharp in terms of $p$).

We extend their definition to the multilinear setting, which requires a careful definition.  For ease of notation, consider the bilinear extension that mimics our earlier definitions:
\[
A_\lambda (f,g)({\bf x}) := \frac{1}{P(\lambda)} \sum_{|{\bf p}|^k+ |{\bf q}|^k=\lambda} (\log{\bf p})(\log{\bf q}) f({\bf x-p}) g({\bf x-q})
\]
and maximal function
\[
A^* (f,g) := \sup_{\lambda \in \Gamma_{2d,k}} |A_\lambda (f,g)| .
\]
It turns out that to say something meaningful about the boundedness of this operator using the slicing method, one must also impose conditions upon both $\bm{p}$ and $\bm{q}$ separately.  We first give a thorough commentary before specifying these conditions.

In the definition of $A_\lambda(f,g)$, we would like to replace $P(\lambda)$ by its asumptotic.  We have a nice asymptotic as long as $\lambda \in \Gamma_{2d,k}$ (for the $\ell$-linear version, we would need $\lambda \in \Gamma_{\ell d,k}$), and naturally this condition is translated to the supremum in the maximal function.  However, this isn't quite enough to be able to apply the slicing method.  The slicing method splits the operator into two pieces, one depending on $\bm{p}$ and one on $\bm{q}$ -- we would then like to apply the Hardy-Littlewood maximal function (along the primes) bound to the piece involving $\bm{p}$ and the linear operator $A^*(g)$ bound to the piece involving $\bm{q}$ (and also the symmetric condition switching the roles of $\bm{p}$ and $\bm{q}$).  To be able to bound $A^*(g)$ we need that $|\bm{q}|^k \in \Gamma^1_{d,k}$ for some allowable progression.  Similarly, switching the roles of the primes, we need $|\bm{p}|^k \in \Gamma^2_{d,k}$ for another allowable progression.  However, since we assume $\lambda \in \Gamma_{2d,k}$, we also need 
\[
\Gamma^1_{d,k}+\Gamma^2_{d,k} = \Gamma_{2d,k},
\]
a ``sumset" condition.  Note that the sumset condition is not enough without assuming both $|\bm{p}|^k \in \Gamma^1_{d,k}$ and $|\bm{q}|^k \in \Gamma^2_{d,k}$ (an easy example as to why is described below).  

\begin{remark}
We give an example to show that these conditions on the progression $\Gamma$ aren't always empty.  Let $k$ be odd, $\ell = 2$, $\Gamma^i_{d,k} = d \mod 2$ and $\Gamma_{\ell d,k} = 2d \mod 2$.  Now let $d$ be even and large; thus $d \equiv 2d \equiv 0 \mod 2$.  These conditions in fact imply that $|\bm{p}|^k, |\bm{q}|^k$ both are even and $\lambda$ is even.  If one of $|\bm{p}|^k, |\bm{q}|^k$ were odd, then so would $\lambda$, a contradiction.  If both of $|\bm{p}|^k, |\bm{q}|^k$ were odd, then since $d$ is even this implies that $\bm{p}$ and $\bm{q}$ have an odd number of '2' entries, which is not allowed (no repetitions are permitted).  Thus every even $\lambda$ will have solutions along the slices $|\bm{p}|^k$ and $|\bm{q}|^k$.
\end{remark}

With this is mind, we define the $\ell$-linear discrete spherical averages along the primes, hilighting the fact in the definition that prime vectors are only allowed on surfaces where the Waring-Goldbach asymptotic holds (for $d, k, \lambda$ where this is defined, as discussed above).  Firstly, let 
\[
P_\ell^{d}(\lambda) = \#\{ (\bm{p_1},\dots ,\bm{p_\ell}) \in \Z^{\ell d}: \bm{p_i}\in \Gamma_{d,k}, \sum_i|\bm{p_i}|^k= \lambda\} 
\]
\[
A_\lambda (f_1, \dots ,f_\ell)({\bf x}) := \frac{1}{\lambda^{\ell d/k -1}} \sum_{(\bm{p_1}, \dots , \bm{p_\ell}) \in P_\ell^{ d}(\lambda)} \prod_{i=1}^\ell (\log{\bf p_i}) f_i({\bf x-p_i})
\]
and corresponding maximal function (with supremum over $\lambda \in \Gamma_{\ell d,k}$.  We emphasize yet again that this maximal function includes a sum over $P_\ell^d$ which is sparser than the original Waring-Goldbach surface $P_1^{d}(\lambda) = \#\{ (\bm{p_1},\dots ,\bm{p_\ell}) \in \Z^{\ell d}: \sum_i|\bm{p_i}|^k= \lambda\}$.  For our slicing technique to work we must therefore ensure that 
\begin{equation}
\label{key_condition}
    \sum_{(\bm{p_1}, \dots , \bm{p_\ell}) \in P_\ell^{d}(\lambda)} \prod_{i=1}^\ell (\log{\bf p_i})\sim\lambda^{\ell d/k-1}, \lambda \in \Gamma = \Gamma_{d,k}^\ell = \Gamma_{d,k}^1+ \cdots \Gamma_{d,k}^\ell 
\end{equation}

Recall that we call a sequence $\Gamma_{d,k}$ admissible if $P(\lambda) \sim \lambda^{d/k-1}$.  If all $\Gamma_{d,k}^i$ are admissible, we still need \eqref{key_condition} to be satisfied.  More information on this appears shortly, but we pause to introduce the main theorem of this section.

\begin{thm}
\label{WG}
  Choose admissible $\Gamma_{d,k}^i$ and let \eqref{key_condition} be satisfied.
  Then $A^*(f_1, \dots , f_{\ell})$ is bounded on $l^{p_1}(\Z^d)\times\ldots\times l^{p_l}(\Z^d) \to l^{r}(\Z^d)$, $\frac{1}{p_1} + \ldots + \frac{1}{p_\ell} \geq \frac{1}{r}$, $p_1,\ldots,p_{\ell}> 1$ and $r>\frac{p_{k,d}}{(\ell-1)p_{k,d}+1}$.  We also have $r >\frac{d}{\ell d-k}$ is necessary.
\end{thm}

We delay the proof to comment on the number theory underlying these surfaces.

The initiation of the study of the Waring-Goldbach problem is often attributed to both Hua and Vinogradov.  Hua studied asymptotics when we restrict $\lambda$ to be in a certain arithmetic progression; the reason for this restriction is that by choosing an appropriate progression, we avoid forcing the use of small primes, which would decrease the number of points on the Waring-Goldbach surface (see \cite{JB} for more information).  While it is true that if $d$ is large enough with respect to $k$ then every $\lambda$ has a Waring-Goldbach solution, we need much more for our techniques to work, namely the same nice asymptotic in each component that Hua used and that was used to study the linear operator.  The requirement in \eqref{key_condition} is in addition necessary for the linearity to interact with the surface.  To comment further, let $\theta(k,p)\in \N$ be the largest power of $p$ dividing $k$ (set $\theta(k,p) = 0$ if there is none), and define 
\begin{equation*}
\gamma(k,p)=\begin{cases}
          \theta+2 \quad &\text{if} \, p=2, 2\divides k \\
          \theta+1 \quad &\text{otherwise}. \\
     \end{cases}
\end{equation*}
Let 
\begin{equation}
    K(k) = \prod_{(p-1)\divides k}p^{\gamma(k,p)};
\end{equation}
admissibility conditions are now determined modulo $K(k)$.  That is, for $d$ large enough with respect to $k$ \cite{Kumchev_Wooley1,Kumchev_Wooley2}, $\Gamma_{d,k}$ is admissible if it consists of residue classes $\lambda \equiv d\pmod{K(k)}$.  The cases when $d$ is smaller than this threshold are a bit more delicate, see, for example, \cite{Kum} for more information.  Since we will need to use bounds for our linear Waring-Goldbach operator, we will always assume the (more restrictive) $d \geq \max\{ d_1(k), d_2(k) \}$ from \cite{ACHK}.  It is easy to see that $K(k) = 2$ for $k$ odd.  Therefore, if $k,d,\ell$ are all odd and $\Gamma_{d,k}^i$ is the progression $1 \pmod{2}$, then \eqref{key_condition} is satisfied with $\Gamma \equiv 1 \pmod{2}$.  Similarly, if $k$ is odd, $d$ is even, and $\Gamma_{d,k}$ is the progression $0 \pmod{2}$, then \eqref{key_condition} is similarly satisfied for all $\ell$.  More generally, if
\[
S :=\{\ell d \pmod{K(k)} \cap d \pmod{K(k)}\} \neq \emptyset
\]
then \eqref{key_condition} holds.

Finally, one could study the multilinear variant of the alternative linear operator:
\[
C_\lambda (f)({\bf x}) := \frac{1}{N_{d,k,t}(\lambda)} \sum_{|{\bf p}|^k=\lambda} (\log{\bf p}) f({\bf x-p}). 
\]
where $N_{d,k,t}(\lambda) = \lambda^{d-1-t}(\log \lambda)^{-d}$ is a varying asymptotic with $t$, whose value changes depending on the residue class $d \pmod{K(k)}$.  For instance, if $k=1$, we could take $t=0$ if $2\divides (\lambda -d)$ and $t=1$ otherwise, for all $d>3$.  This operator would have the advantage of not needing residue class restrictions built into $\lambda$, that is, as long as $d$ is large enough with respect to $k$ (for instance, $d > H_{all}(k)$ in the notation of \cite{JB}), we could consider all $\lambda$.  The disadvantage, of course, is that the residue class restrictions are built into the asymptotic, and to the best of our knowledge there is no known linear theory for this operator.  This study would be interesting to pursue in the future. 

\begin{proof}
(Of Theorem \ref{WG}). Once the care has been taken regarding the progressions $\Gamma$, the proof follows in a similar fashion as previous examples.  First let $\eta = \lambda - |\bm{p}|^k$ and slice 
\[
A_\lambda (f,g)({\bf x}) := \frac{1}{\lambda^{d/k}}\sum_{|{\bf p}|^k\leq \lambda} (\log{\bf p})f({\bf x-p})\cdot \frac{1}{\lambda^{d/k-1}}\sum_{|{\bf q}|^k=\eta}(\log{\bf q})  g({\bf x-q})
\]
and note that 
\[
\frac{1}{\lambda^{d/k}}\sum_{|{\bf p}|^k\leq \lambda} (\log{\bf p})f({\bf x-p}) \leq \sup_{\lambda \in \Gamma_{d,k}}\frac{1}{\lambda^{d/k}}\sum_{|{\bf p}|^k\leq \lambda} (\log{\bf p})f({\bf x-p}) = M_{HL}^{primes}(f)
\]
so altogether
\[
A_\lambda (f,g)({\bf x}) \leq M_{HL}^{primes}(f)\cdot \sup_{\eta \in \Gamma_{d,k}} \frac{1}{\eta^{d/k-1}}\sum_{|{\bf q}|^k=\eta}(\log{\bf q})  g({\bf x-q}) = M_{HL}^{primes}(f)\cdot A^*(g).
\]
Since $M_{HL}^{primes}$ is bounded on $l^p(\Z^d)$ for all $p> 1$ \cite{Trojan, Wierdl} and $A^*(g)$ is bounded on $l^p(\Z^d)$ for all $p> p_{k,d}$, the result follows in the blinear case.  Induction concludes the $\ell$-linear case.  The necessary conditions yet again follow by testing Dirac delta functions.
\end{proof}

\section{General formulation and further remarks}
In examining these examples, we can see some common elements that are required.  These include: a nice asymptotic for the number of lattice points that scales conveniently, and allows us to ``pull off" the asymptotic for the number of points inside a ball, an additive structure between the $\ell$ variables defining the surface (namely, no mixing of variables via product conditions), positivity in the variables, no ``holes" in the sequence of $\lambda$ defining the surface (or if there are ``holes", that these relate to the holes in the linear versions), and a nice protectivization of the surface in $\Z^{\ell d}$ to the surface in $\Z^{(\ell -1)d}$.  

Each of these elements requires some thought.  For instance, the asymptotic requirement can be seen by the following argument.  Notice that the examples above require an asymptotic for the number of points on our degree $k$ surface in $Z^{\ell d}$ of $\lambda^{\phi(\ell d)}$ and the following relationship:
\[
\lambda^{\phi(\ell d)} = \lambda^{d/k}\cdot \lambda^{\phi((\ell - 1)d)}.
\]
In other words, we need
\[
\frac{\phi(\ell d) - \phi((\ell-1)d)}{d} = \frac{1}{k}
\]
which is a condition on the derivative in the discrete setting (that is, we want this to hold in all large dimensions $d$ and all $\ell \in \N$).  This condition is implied by $\phi'(x) = \frac{1}{k}$, that is $\phi(d) = d/k +C$.  These $\phi$ are precisely the types of functions involved in the asymptotics for lattice points on spheres, balls and related surfaces, This implication clearly indicates the types of surfaces that we can handle by our method based on their asymptotics.  There are also other criteria to consider, as mentioned above.  As we did for the asymptotic, we can phrase these elements more precisely below, allowing us to state a more general theorem about the applicability of the slicing method.  

For convenience, we state everything in terms of the bilinear setup, using the vairables $u$ and $v$.
Consider the following conditions on a bilinear operator
\begin{equation}
\label{general operator}
    T_\lambda(f,g)(\bm{x}) = \left| \frac{1}{N(\lambda)}\sum_{h(\bm{u}, \bm{v},\lambda)}f(\bm{x}-\bm{u})g (\bm{x}-\bm{v}) \right|
\end{equation}
that satisfy
\begin{enumerate}
    \item An asymptotic scaling that allows for a separation into the ball asymptotic and the asymptotic for the underlying surface in $\Z^d$: $N(\lambda) \sim \lambda^{\phi(2d)}$ where $\phi(d) = d/k+C$
    \item A positive, additive structure of $h(\bm{u}, \bm{v}, \lambda) := h(\bm{u}, \bm{v}) \leq \lambda$ (or $h(\bm{u}, \bm{v}, \lambda) := h(\bm{u}, \bm{v}) = \lambda$): that is, $h(\bm{u}, \bm{v}) = h_1(\bm{u})+h_2(\bm{v})$, with $h_1, h_2 \geq 0$
    \item All large $\lambda$ are covered in the asymptotics in both $\Z^{2d}$ and $\Z^d$, or if certain $\lambda$ are excluded, these relate in a one to one fashion to those $\lambda$ excluded in the linear projections (see next item for more details)
    \item There is a one to one correspondence between allowable $\lambda$ and $\eta_1$, where $\eta_1$ is defined via the projection $h_1(\bm{u}) \leq \lambda - h_2(\bm{v}) :=\eta_1$ (or with $``="$ for surfaces $h(\bm{u}, \bm{v}) = \lambda$) onto $\bm{u}$ of $h$ (implicitly assume projection is well-defined in $\Z^d$)
    \item There is a one to one correspondence between allowable $\lambda$ and $\eta_2$, where $\eta_1$ is defined via the projection $h_2(\bm{v}) \leq  \lambda - h_1(\bm{u}) :=\eta_2$ onto $\bm{v}$ of $h$ (the symmetric condition).
\end{enumerate}
Now we can state
\begin{thm}
  Assume conditions 1-5 above.  Also assume that the linear operator
  \[
  T^*(f) := \sup_{\eta_1} \frac{1}{\eta_1^{\phi(d)}}\sum_{h(\bm{u},\eta_1)}\left|f (\bm{x}-\bm{u}) \right|
  \]
 is bounded on $l^p$ for all $p> p_d$.  Then the bilinear operator $T^*(f,g)$ is bounded on $l^{p}(\Z^d)\times l^{q}(\Z^d) \to l^{r}(\Z^d)$, $\frac{1}{p} + \frac{1}{q} \geq \frac{1}{r}$, $r>\frac{p_d}{p_d + 1}$, $p,q> 1$.
\end{thm}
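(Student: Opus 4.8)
The plan is to carry out, at this level of generality, the same two-step ``slice and dice'' argument used in all four examples: produce two symmetric pointwise bounds of the shape (ball maximal function)\,$\times$\,(surface maximal function $T^*$), and then combine them through H\"older's inequality, bilinear interpolation, and nesting to recover the full range.

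First I would slice using the additive structure of condition~2, writing $h(\bm{u},\bm{v}) = h_1(\bm{u}) + h_2(\bm{v})$ with $h_1,h_2\ge 0$, together with the asymptotic factorization of condition~1. Since $\phi(d) = d/k + C$ forces $\phi(2d) = \phi(d) + d/k$, I may replace $N(\lambda)$ by $\lambda^{\phi(2d)}\sim \lambda^{d/k}\cdot\lambda^{\phi(d)}$, assigning $\lambda^{d/k}$ to the $\bm{u}$-sum and $\lambda^{\phi(d)}$ to the $\bm{v}$-sum. Grouping the double sum by $\bm{u}$ and relaxing the constraint on $\bm{u}$ to the full sublevel set $h_1(\bm{u})\le\lambda$ (permissible by positivity), the $\bm{u}$-factor becomes the Hardy--Littlewood maximal function $M_{HL}(f)=\sup_{\lambda}\lambda^{-d/k}\sum_{h_1(\bm{u})\le\lambda}|f(\bm{x}-\bm{u})|$ associated to the $\Z^d$ ball $\{h_1\le\lambda\}$, which is bounded on $l^p$ for every $p>1$.

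For the $\bm{v}$-factor, for each fixed $\bm{u}$ I would set $\eta_2=\lambda-h_1(\bm{u})\le\lambda$ and use $\lambda^{-\phi(d)}\le\eta_2^{-\phi(d)}$ (valid because $\phi(d)\ge 0$ in all large dimensions) to dominate the inner $\bm{v}$-sum by $\sup_{\eta_2}\eta_2^{-\phi(d)}\sum_{h_2(\bm{v})\le\eta_2}|g(\bm{x}-\bm{v})|=T^*(g)$, where one reads ``$=\eta_2$'' for the equality-type surfaces. Taking $\sup_\lambda$ then produces the pointwise product $T^*(f,g)\le M_{HL}(f)\cdot T^*(g)$, and the symmetric slicing gives $T^*(f,g)\le T^*(f)\cdot M_{HL}(g)$. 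The delicate point -- and the one place where the arithmetic of the surface re-enters -- is to guarantee that as $\lambda$ runs over its allowable set the projected parameter $\eta_2$ (respectively $\eta_1$) runs over exactly an allowable set for $T^*$; this is the content of the one-to-one correspondences in conditions~4 and~5 and the coverage hypothesis in condition~3, and it is the analogue of the sumset/progression compatibility forced upon us in the Waring--Goldbach case. I expect this bookkeeping to be the main obstacle to the argument running verbatim.

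Finally I would assemble the two pointwise bounds. As $M_{HL}$ is bounded on $l^p$ for $p>1$ and $T^*$ on $l^q$ for $q>p_d$, H\"older's inequality applied to each product yields $l^p\times l^q\to l^r$ boundedness along the line $\tfrac1r=\tfrac1p+\tfrac1q$: from the first decomposition with $p>1,\ q>p_d$, and from the symmetric one with $p>p_d,\ q>1$. Bilinear interpolation between these two families fills in the convex hull of the two exponent rectangles and drives the output exponent $r$ down to the threshold recorded in the statement, after which the nesting inequality $\|\cdot\|_q\le\|\cdot\|_p$ upgrades the H\"older line to the full region $\tfrac1p+\tfrac1q\ge\tfrac1r$, with $p,q>1$.
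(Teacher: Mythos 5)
Your decomposition is exactly the argument the paper intends: this theorem is stated without a written proof, as the distillation of the four worked examples, and each of those is proved by precisely your scheme (slice off a ball average using conditions 1 and 2, dominate it by $M_{HL}$, dominate the remaining sum by the linear surface maximal operator after the change of parameter $\eta$ licensed by conditions 3--5, take the symmetric bound, then H\"older plus nesting). Up through the two pointwise bounds $T^*(f,g)\le M_{HL}(f)\cdot T^*(g)$ and $T^*(f,g)\le T^*(f)\cdot M_{HL}(g)$, and your identification of conditions 3--5 as the bookkeeping that makes the reparametrization legitimate, your write-up is correct and matches the paper.

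The gap is in the last step, which you assert rather than compute: interpolation between the two exponent rectangles cannot ``drive $r$ down'' to $\frac{p_d}{2p_d+1}$. The first decomposition gives the rectangle $\{0\le \tfrac1p<1,\ 0\le \tfrac1q<\tfrac1{p_d}\}$, the second its reflection, and every estimate produced lies on the H\"older plane $\tfrac1r=\tfrac1p+\tfrac1q$; on the convex hull of the two rectangles the supremum of $\tfrac1p+\tfrac1q$ equals $1+\tfrac1{p_d}$ (it is constant along the segment joining the corners $(1,\tfrac1{p_d})$ and $(\tfrac1{p_d},1)$). Hence the method yields exactly $r>\frac{p_d}{p_d+1}$ and nothing smaller; indeed, since $p,q>1$ force $\tfrac1r\le\tfrac1p+\tfrac1q<2$, no interpolation of any available bounds can produce $\tfrac1r$ near $2+\tfrac1{p_d}$, which is what $r$ near $\frac{p_d}{2p_d+1}$ would require. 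In fact the quoted threshold cannot be correct as written: applied to the discrete bilinear spherical maximal function ($k=2$, $p_d=\frac d{d-2}$) it would assert boundedness for some $r<\frac d{2d-2}$, contradicting the necessary condition $r>\frac{d}{\ell d-k}$ that the paper itself derives from Dirac deltas in Section 2.2; moreover every bilinear example in the paper (Hardy--Littlewood, spherical, annular, Waring--Goldbach) has threshold of the form $\frac{p_{\mathrm{lin}}}{p_{\mathrm{lin}}+1}$. So $\frac{p_d}{2p_d+1}$ is evidently a misprint for $\frac{p_d}{p_d+1}$ (equivalently, the $\ell$-linear formula $\frac{p_d}{(\ell-1)p_d+1}$ at $\ell=2$), and your argument, carried out honestly, proves that corrected statement; as a proof of the literal statement it fails at the interpolation claim, and you should have flagged the discrepancy rather than asserting that interpolation reaches the quoted exponent. (A minor further point: in this abstract setting the $l^p$, $p>1$, boundedness of the ball operator attached to $h_1$ is itself something to justify, e.g.\ from the lattice-point asymptotic and doubling, though the paper glosses over this as well.)
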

Analogous $\ell$-linear extensions are also possible.  This result underscores that the way that degree $k$ homogeneous (positive) surfaces interact with the additive $\ell$-linear structure is integral to the slicing method. 

\bibliographystyle{amsplain}

\begin{thebibliography}{99}

\bibitem{ACHK2}
T. Anderson, B. Cook, K. Hughes, and A. Kumchev,  \emph{Improved $l^p$ boundedness for Integral k-Spherical Maximal Functions}.  Discrete Analysis, May 29, 2018.

\bibitem{ACHK}
T. Anderson, B. Cook, K. Hughes, and A. Kumchev, \emph{On the Ergodic Waring-Goldbach Problem}.  To appear in Journal of Functional Analysis. 

\bibitem{AP} 
T. Anderson and E. Palsson.  \emph{Bounds for discrete multilinear spherical maximal functions in higher dimensions}.  To appear in Collectanea Matematica.

\bibitem{AP2}
T. C. Anderson and E. A. Palsson, \emph{Bounds for discrete multilinear spherical maximal functions}.  Bull. Lond. Math. Soc. 53 (2021), no. 3, 855--860.

\bibitem{BGHHO}
J. Barrionuevo, L. Grafakos, D. He, P. Honz\'{i}k and L. Oliveira,
\emph{Bilinear spherical maximal function}, Math. Res. Lett., \textbf{25} (2018), no. 5, 1369--1388.

\bibitem{B86}
J. Bourgain,
\emph{Averages in the plane over convex curves and maximal operators}, J. Analyse Math. \textbf{47} (1986), 69-85.

\bibitem{JB}
Buttcane, Jack. \emph{A note on the Waring-Goldbach problem}. J. Number Theory 130 (2010), no. 1, 116--127.

\bibitem{Dosidis}
G. Dosidis.  \emph{Multilinear Spherical Maximal Function}.  To appear in Proceedings of the AMS.  Preprint on arXiv.

\bibitem{DG}
G. Dosidis and L. Grafakos.  \emph{On Families between the Hardy-Littlewood and Spherical maximal functions}.  Preprint on arXiv.

\bibitem{Geba_et_al}
D. Geba, A. Greenleaf, A. Iosevich, E. Palsson, and E. Sawyer. \emph{Restricted convolution inequalities, multilinear operators and applications}.
Math. Res. Lett. 20 (2013), no. 4, 675-694.

\bibitem{HHYpreprint}
Y. Heo, S. Hong, C. W. Yang,
\emph{Improved bounds for the bilinear spherical maximal operators}, preprint.

\bibitem{Hua_book}
L.~K. Hua, \emph{Additive {T}heory of {P}rime {N}umbers}, American Mathematical
  Society, 1965.
  
\bibitem{Hughes}
K.~Hughes, \emph{Maximal functions and ergodic averages related to {W}aring's
  problem}, Israel J. Math. \textbf{217} (2017), no.~1, 17--55.
  
\bibitem{Hughes_restricted}
\bysame, \emph{Restricted weak-type endpoint estimates for $k$-spherical
  maximal functions}, Math. Z. \textbf{286} (2017), no.~3--4, 1303--1321.
  
\bibitem{Ionescu}
A. D. Ionescu, \emph{An endpoint estimate for the discrete spherical maximal functions}, Proc. Am. Math. Soc. \textbf{132} (2004), no. 5, 1411-1417.

\bibitem{JL}
Jeong and Lee, \emph{Maximal estimates for the bilinear spherical averages and the bilinear Bochner-Riesz operators}.  Preprint on arXiv.

\bibitem{Kum}
Kumchev, Angel V. \emph{On the Waring-Goldbach problem: exceptional sets for sums of cubes and higher powers}. Canad. J. Math. 57 (2005), no. 2, 298--327. 
  
\bibitem{Kumchev_Wooley1}
A.~V. Kumchev and T.~D. Wooley, \emph{On the {W}aring--{G}oldbach problem for
  eighth and higher powers}, J. Lond. Math. Soc. (2) \textbf{93} (2016), no.~3,
  811--824. 
  
\bibitem{Kumchev_Wooley2}
\bysame, \emph{On the {W}aring--{G}oldbach problem for seventh and higher
  powers}, to appear in Monatsh. Math.

\bibitem{MSW}
A.~Magyar, E.~M. Stein, and S.~Wainger, \emph{Discrete analogues in harmonic
  analysis: {S}pherical averages}, Ann. of Math. (2) \textbf{155} (2002),
  no.~1, 189--208. 
 
 \bibitem{Mirek_etal}
 M. Mirek, T. Szarek, and B. Wrobel.  \emph{Dimension-free estimates for the discrete spherical maximal functions}.  Preprint on arXiv.
  
\bibitem{MirekTrojan}
M.~Mirek and B.~Trojan, \emph{Cotlar's ergodic theorem along the prime
  numbers}, J. Fourier Anal. Appl. \textbf{21} (2015), no.~4, 822--848.
  
 \bibitem{Oberlin}
D. Oberlin, \emph{Multilinear convolutions defined by measures on spheres}. 
Trans. Amer. Math. Soc. 310 (1988), no. 2, 821-835. 

\bibitem{Pierce}
 Pierce, Lillian B. Discrete fractional Radon transforms and quadratic forms. Duke Math. J. 161 (2012), no. 1, 69–106.

\bibitem{St76}
E. M. Stein,
\emph{Maximal functions: spherical means}, Proc. Nat. Acad. Sci. U.S.A. \textbf{73}, 1976.

\bibitem{Trojan}
B. Trojan, \emph{Endpoint Estimates for the Maximal Function over Prime Numbers}, To appear in JFAA.

\bibitem{Vaughan_hua}
R.~C. Vaughan, \emph{On Waring's problem for smaller exponents. II}, 
Mathematika \textbf{33} (1986), no. 1, 6--22. 

\bibitem{Wierdl}
M.~Wierdl, \emph{Pointwise ergodic theorem along the prime numbers}, Israel J.
  Math. \textbf{64} (1988), no.~3, 315--336 (1989). 

\bibitem{Wooley_asymptotic}
T.~D. Wooley, \emph{The asymptotic formula in {W}aring's problem}, Int. Math.
  Res. Not. IMRN (2012), no.~7, 1485--1504. 
 


\end{thebibliography}

\end{document}